\documentclass[12pt]{amsart}
\usepackage{amssymb}
\usepackage{amsmath}
\usepackage{amsfonts}
\oddsidemargin=-.0cm
\evensidemargin=-.0cm
\textwidth=16cm
\textheight=22cm
\topmargin=0cm



\def \and{\qquad\text{and}\qquad}


\newtheorem{proposition}{Proposition}[section]
\newtheorem{theorem}[proposition]{Theorem}

\newtheorem{lemma}[proposition]{Lemma}
\theoremstyle{definition}

\theoremstyle{remark}
\newtheorem{remark}[proposition]{Remark}

\numberwithin{equation}{section}

\begin{document}

\title[On Lagrange's discrete model of the wave equationin space dimension greater than one]{Lagrange's discrete model of the wave equation in space dimension greater than one}

\author{Massimo Villarini}
\footnote[1]{Massimo Villarini, Dipartimento di Scienze Fisiche, Informatiche e Matematiche, via Campi 213/b 41100, Universit\'a di Modena e Reggio Emilia, Modena, Italy

E-mail: massimo.villarini@unimore.it}

\begin{abstract}
A celebrated theorem of Lagrange \cite{lagrange}  concerns a discrete model, based on Newton's Second Law of dynamics, for the PDE describing the transversal motion of a string: such mechanical  model is expressed through a family of second order ODEs, depending on a discretization parameter, whose solutions Lagrange proved to converge uniformly to the solutions of the PDE, as the discretization parameter tends to $0$.  Answering to a question posed by Gallavotti \cite{gallavotti}, we generalize this theorem to the case of $n$-dimensional space variable, $n>1$. The proof is based on the convergence analysis of the simplest finite difference numerical scheme for the wave equation.
\end{abstract}

\maketitle

\section{Introduction}
In 1759 Lagrange published two memoires on sound propagation \cite{lagrange} related to  a scientific controversy between Euler and d'Alembert on their solutions of the PDE modelling the vibration of a string.

In those memoires Lagrange proposed a  mechanical model of a discretized vibrating string, based on Newton's Second Law of dynamics, leading to a linear second order  ODE with $N$ degrees of freedom, $N$ being the number of points of a lattice discretizing the string continuum. He succeded in diagonalizing the matrix of the linear ODE,  therefore  finding its explicit analytic solution: this result seems to be specific to the spatial one-dimensional case of the vibrating string  (see \cite{gallavotti} \textsection 4, comment before Proposition 16). Letting $N \rightarrow \infty$, {\it i.e.} passing to the continuum limit, he proved convergence of the explicit solutions of the ODE to solutions of the vibrating string PDE, hence  reobtaining and generalizing Euler results: we will quote this result as {\it Lagrange's 1759 Theorem}.  This was perhaps the first rigorous example of reduction of a scalar field PDE to first principles. A complete account of Lagrange's theory can be found in the book \cite{gallavotti} by G. Gallavotti, where the author also generalizes Lagrange's mechanical and ODE models to the case of spatial dimension $n>1$, and proposes the natural generalization of Lagrange's 1759 Theorem to that case (see Proposition 16. \textsection $4$ in \cite{gallavotti} and the following Observation, where the sought generalization is settled as an open question).

The main result of the present article, Theorem (\ref{principale}),  is  the generalization of Lagrange's 1759 Theorem to the case of spatial dimension $n>1$: it answers in the affirmative to the question posed implicitly by Gallavotti. The proof is based on the convergence analysis of the simplest finite difference scheme for the wave equation, {\it cfr.} \cite{john-adv}. The same approach, namely reducing, through convergence analysis of suitable finite difference scheme, the  PDE modeling scalar field theory to a family of ODEs depending on a discretizing parameter,  turns to be useful for the heat equation, too,  and will be subject of a forthcoming article.

Theorem (\ref{principale}) is stated in the next section, where it is proved in a simplified version. More general versions are proved in the subsequent sections, and finally the theorem is proved in  full generality. 

\section{The homogeneous case}
In this section we will state our main result, generalizing Lagrange's 1759 Theorem to the case of spatial dimension $n>1$, and we will prove it in a simplified form. Let $\Omega$ be an open, connected, bounded subset of $\mathbb{R}^n$, $n \geq 1$, having boundary $\partial \Omega$ which is a smooth $(n-1)$-manifold: weakening of these hypotheses is briefly discussed at the end of the article. The d'Alembert (continuos) differential operator is
\[
\Box = \frac{\partial^2}{\partial t^2} - c^2 \Delta_x
\]
where
\[
\Delta_x = \frac{\partial^2}{\partial x_1^2} + \cdots +\frac{\partial^2}{\partial x_n^2} .
\]
Up to a linear change of coordinates d'Alembert operator reduces to
\[
\Box = \frac{\partial^2}{\partial t^2} -  \Delta_x
\]
and in this form it will be considered throughout this article, without explicit displaying in the Laplace operator the space variable $x$. The scalar variable $t$ will be referred to as time.

 Let $T>0$. We will be interested in the following mixed initial/boundary problem for the wave equation
\[
(L)
\begin{cases}
\frac{\partial^2 u}{\partial t^2} - a(x) \Delta u + \sigma (x) u = w (x,t)  \; , \; in \; \Omega \times (-T , T) \\
u(x,0)=f(x) \;, \; in \; \Omega \times \{0 \} \\
\frac{\partial u}{\partial t}(x,0) = g(x) \; , \; \Omega \times \{0 \} \\
u(x,t) \equiv h(x) \; , \; in \; \partial \Omega \times (-T , T)
\end{cases}
\]
The choice of constant $T$ is instrumental in the proof, and  it does not correspond to any restriction of the domain of definition of the solution of $(L)$.
The functions
\[
a, \sigma : \Omega \rightarrow \mathbb{R}
\]
are the coefficients of the PDE  ($a>0$ is the velocity of wave propagation, $\sigma$ is the flexibility coefficent measuring resistence to deformation of the medium  ), the functions
\[
f, g : \Omega \rightarrow \mathbb{R}
\]
are the initial data
\[
h  : \partial \Omega \rightarrow \mathbb{R}
\]
is the boundary datum and
\[
w  : \Omega  \times \mathbb{R} \rightarrow \mathbb{R}
\]
is the forcing term. We suppose that initial and boundary data satisfy the {\it compatibility conditions}
\[
(LC)
\begin{cases}
lim_{y \rightarrow x} f(y) = h(x) \; y \in \Omega \;, \; x \in \partial \Omega\\
lim_{y \rightarrow x} g(y)=0 \; y \in \Omega \;, \; x \in \partial \Omega \\
\Delta_{\vert \partial \Omega} h =0
\end{cases}
\]
where $\Delta_{\vert \partial \Omega}$ is the restriction of the Laplace operator with respect to $x$ to the submanifold $\partial \Omega$. In this section we will mainly consider the following simplified case of $(L)$
\[
(E)
\begin{cases}
\Box u = 0  \; , \; in \; \Omega \times (-T , T) \\
u(x,0)=f(x) \;, \; in \; \Omega \times \{0 \} \\
\frac{\partial u}{\partial t}(x,0) = g(x) \; , \; \Omega \times \{0 \}
\end{cases}
\]
{\it i.e.} the homogeneous, constant velocity ($a=1$), infinitely flexible ($\sigma =0$), initial data case of $(L)$.

We introduce now a discretization of $(E)$. Let $\Delta x$, $\Delta t$ two positive numbers, and let
\[
\begin{cases}
\Sigma_{\Delta x} = \{ x \in \mathbb{R}^n : x= k \Delta x, \; k \in \mathbb{Z}^n  \} \\
\Sigma_{\Delta t} = \{ t \in \mathbb{R} : t= p \Delta x, \; p \in \mathbb{Z}  \} \\
\Sigma_{\Delta x , \Delta t}= \Sigma_{\Delta x} \times \Sigma_{\Delta t}
\end{cases}
\]
be the corresponding lattices. We define the set $\mathcal{A}_T$ of {\it admissible lattices} as
\[
\mathcal{A}_T = \{ (\Delta x, \Delta t): \frac{T}{\Delta t} \in \mathbb{N}, \; \frac{\Delta t}{\Delta x} \leq \frac{1}{\sqrt{n}} \}
\]
The condition appearing in the definition of $\mathcal{A}_T$
\begin{equation}\label{cfl}
\frac{\Delta t}{\Delta x} \leq \frac{1}{\sqrt{n}}
\end{equation}
is the celebrated {Courant-Friedrichs-Lewy condition}, \cite{cfl}.  Let
\[
E_{\infty} = \cup_{(\Delta x , \Delta t) \in \mathcal{A}_T} \Sigma_{(\Delta x , \Delta t)} \cap (\Omega \times (-T , T))
\]
$E_{\infty}$ is dense in $\Omega \times (-T , T)$.

We will consider the discrete differential operators
\begin{itemize}
\item [] $\delta_{\Delta t} u (x,t) =\frac{u(x,t+\Delta t) - u(x,t)}{\Delta t}$ \\
\item [] $\delta_{\Delta t}^{-1} u (x,t) = \frac{u(x,t)-u(x,t-\Delta t)}{\Delta t}$ \\
\item [] $\delta_{\overline {\Delta t}} u (x,t) = \frac{u(x,t+\Delta t) - u(x,t- \Delta t)}{2 \Delta t}$ \\
\item [] $\delta_{\Delta t}^{-1} \circ \delta_ {\Delta t} u (x,t) = \frac{u(x,t+\Delta t) -  2 u(x,t) + u(x,t- \Delta t)}{ \Delta t^2}$
\end{itemize}
and the analogously defined discrete differential operators $\delta_{k,\Delta x}, \delta_{k , \Delta x}^{-1}, \delta_{k , \Delta x}^{-1} \circ \delta_ {k , \Delta x} $, where $k=1, \ldots , n$ and {\it e.g.}
\[
\delta_{k,\Delta x} = \frac{u(x+\Delta x \;  e_k , t) - u(x,t)}{\Delta x }
\]
where $e_k$ is the $k$-th vector of the canonical basis in $\mathbb{R}^n$. Let
\[
\Sigma_{\Delta x} \cap \Omega = \Omega_{\Delta x} \cup \partial_{\Delta x}
\]
where $x \in \Omega_{\Delta x}$ if it is a point of the lattice $\Sigma_{\Delta x}$ which belongs to $\Omega$  togheter with all its $2n$-neighbours, while $x \in \partial_{\Delta x} \Omega$ is a point of the lattice which belongs to $\overline \Omega$, the closure of $\Omega$, such that at least one of its $2n$-neighbours does not belong to $\Omega$. We define the discrete differential operator ($(\Delta x , \Delta t)$-approximation of the d'Alembert operator)
\[
\Box_{\Delta x, \Delta t} = \delta_{\Delta t}^{-1} \circ \delta_ {\Delta t} - \sum_{k=1}^n  \delta_{k , \Delta x}^{-1} \circ \delta_ {k , \Delta x} 
\]
and we will consider the following approximated version of $(E)$
\[
(E)_{\Delta x , \Delta t}
\begin{cases}
\Box_{\Delta x, \Delta t} v(x,t)=0 \; in \; \Sigma_{\Delta x, \Delta t} \cap (\Omega \times (-T,T)) \\
v(x,0)= f(x) \; in \; (\Sigma_{\Delta x} \cap \Omega) \times \{ 0\} \\
\delta_{\overline {\Delta t}} v(x,0)=g(x) \; in \; (\Sigma_{\Delta x} \cap \Omega) \times \{ 0\} 
\end{cases}
\]
After considering this initial value problem, we will add to it the boundary condition
\[
v(x,t)= 0 \; in \; \partial \Omega_{\Delta x } \times ((-T,T)) \cap \Sigma_{\Delta t}
\]
hence getting an approximated version $(L)_{\Delta x , \Delta t}$ of a simplified form of $(L)$. The approximated versions of initial and mixed valued problems for a PDE are referred to as {\it finite difference numerical schemes} in numerical analysis. In particular the numerical scheme based on the definition of $\Box_{\Delta x , \Delta t}$ is defined the {\it simplest numerical scheme} for the wave equation in \cite{john-adv} \textsection {7}. We observe that $(E)_{\Delta x , \Delta t}$, being a linear explicit scheme, has always a solution, which is unique.

We will use a standard notation in the theory of evolution equations: a function
\[
u : \Omega \times \mathbb{R} \rightarrow \mathbb{R} \;, \; (x,t) \rightarrow u(x,t)
\]
will be also denoted as
\[
u(x)(t)
\]
emphasizing its interpretation as a $t$-parametrized curve in some space of functions defined on $\Omega$. According to this interpretation we will sometimes write
\[
\dot u (x)(\cdot)= \frac{\partial u}{\partial t}(x, \cdot) .
\]
We are ready to state our main result:
\begin{theorem}\label{principale}
Let $f , a, \sigma , h \in C^5(\mathbb{R}^n , \mathbb{R})$,  $g \in C^4(\mathbb{R}^n , \mathbb{R})$,  $w \in C^5(\mathbb{R}^n \times (-T , T) , \mathbb{R})$, $w$ having compact support. and consider their restrictions as the corresponding functions appearing in $(L)$. Let $f^{(p)}, w^{(p)}, g^{(q)}$, $p=0,1, \ldots ,5 , q= 0,1, \dots, 4$ decay suitably fast as $\vert x \vert \rightarrow \infty$ in order that the Fourier transforms
\[
\hat{ f^{(p)} }( \alpha ) = \frac{1}{(2 \pi)^{\frac{n}{2}}} \int_{\mathbb{R}^n} f^{(p)}(s) e^{-i\alpha \cdot s} \; ds
\]
\[
\hat {g^{(q)}} ( \alpha ) = \frac{1}{(2 \pi)^{\frac{n}{2}}} \int_{\mathbb{R}^n} g^{(q)}(s) e^{-i\alpha \cdot s}  \; ds
\]
are well-defined: for instance let $f , w  \in C^5(\mathbb{R}^n , \mathbb{R}) \cap W^{5,1}(\mathbb{R}^n , \mathbb{R})$ and $g \in C^4(\mathbb{R}^n , \mathbb{R}) \cap W^{4,1}(\mathbb{R}^n , \mathbb{R})$. Notation $\hat f= \hat {f}^{(0)}$,  $\hat g= \hat {g}^{(0)}$ will be used.

 Let 
\[
v^{\Delta x , \Delta t} : \Sigma_{\Delta x , \Delta t} \cap (\Omega \times (-T , T)) \rightarrow \mathbb{R}
\]
be the solution of $(L)_{\Delta x , \Delta t}$,  $(\Delta x , \Delta t) \in \mathcal{A}_T$. Then
\begin{itemize}
\item [a.1)]
\begin{equation}\label{lewy}
\; lim_{(\Delta x , \Delta t) \rightarrow (0,0)}  v^{\Delta x , \Delta t}(x,t) = u (x,t)
\end{equation}
uniformly for $(x,t) \in E_{\infty}$.  \\
\item [a.2)]
The limit in (\ref{lewy}) can be extended as a uniform limit to $\Omega \times (-T.T)$. \\
\item [a.3)]
The previous statements hold for the difference quotients of $v^{\Delta x , \Delta t}$ entering in the definition of $\Box_{\Delta x, \Delta t}$, which converge uniformly in $\Omega \times (-T,T)$ to the partial derivatives of $u$ entering in the definition of $\Box$. Therefore $u(x,t)$ is a $C^2$-solution of $(L)$.

In the homogeneous case, with $a \equiv 1$, $\sigma \equiv 0$, we have  in $E_{\infty}$ the analytic expression
\[
v^{\Delta x , \Delta t} (x,t) = \frac{1}{(2 \pi)^{\frac{n}{2}}} \int_{\mathbb{R}^n} e^{i \alpha \cdot s} (\hat f (\alpha) \cos (\beta t) + \hat g(\alpha) \frac{\Delta t \sin (\beta t)}{\sin (\beta \Delta t) } )\; d \alpha
\]   
and in $\Omega \times (-T,T)$
\[
u(x,t) = \frac{1}{(2 \pi)^{\frac{n}{2}}} \int_{\mathbb{R}^n} e^{i \alpha \cdot s} (\hat f (\alpha) \cos (\alpha t) + \hat g(\alpha) \frac{\sin (\vert \alpha \vert t)}{\sin (\vert \alpha \vert) }) \; d \alpha
\]
where $(\alpha , \Delta x , \Delta t) \rightarrow \beta (\alpha , \Delta x , \Delta t)$ is a real analytic function in a neighbourhood of $\mathbb{R}^n \times \{ 0 \} \times \{ 0 \}$.
  \\

\item[b.1)] for  $(\Delta x , \Delta t) \in \mathcal{A}_T$ 
\[
 \varphi^{\Delta x}(x)(t) = lim_{\Delta t \rightarrow 0} v^{\Delta x , \Delta t}(x,t)  
\]
exists and is  uniform with respect to $(x,t) \in E_{\infty}$.  \\
\item[b.2)]
The limit in the previous statement  can be extended as a uniform limit to $\Omega \times (-T.T)$. \\
\item [b.3)]
The previous statements hold for the difference quotient $\delta_{\Delta t}^{-1} \circ \delta_{\Delta t} v^{\Delta x , \Delta t}$, which converges uniformly in $\Omega \times (-T,T)$ to the second derivative with respect to $t$ of  $ \varphi^{\Delta x}(x)(t)$ .

In the homogeneous case, with $a \equiv 1$, $\sigma \equiv 0$, in $E_{\infty}$
\[
\varphi^{\Delta x}(x)(t)  =  \frac{1}{(2 \pi)^{\frac{n}{2}}} \int_{\mathbb{R}^n} e^{i \alpha \cdot s}(\hat f (\alpha ) \cos (\beta (\alpha , \Delta x , 0) t) + \hat g (\alpha )\frac{\sin (\beta (\alpha , \Delta x , 0)t)}{\beta (\alpha , \Delta x , 0)}  ) \; d \alpha
\]
 \\
\item [b.4)]
\[
 lim_{\Delta x \rightarrow 0} \varphi^{\Delta x}(x)(t) = u(x,t)
\]
{\it i.e.}
\[
lim_{\Delta x \rightarrow 0} lim_{\Delta t \rightarrow 0 }   v^{\Delta x , \Delta t}(x,t) = u (x,t).
\] 
uniformly for $(x,t) \in \Omega \times (-T,T)$\\
\item [c)] 
for each fixed $x$ , $\varphi^{\Delta x}(x)(t) $ is solution of the Lagrange's ODE
\[
\begin{cases}
\ddot {\xi }(x) = a(x) \sum_{k=1}^n  \delta_{k , \Delta x}^{-1} \circ \delta_ {k , \Delta x} \xi (x) + \sigma (x) \xi (x) + w(x,t)  \\
\xi (x) (0) = f(x) \; x \in \Omega_{\Delta x}\\
\dot {\xi (x)}(0)=g(x) \; x \in \Omega_{\Delta x} \\
\xi (x)(t) \equiv 0 \; x \in \partial \Omega_{\Delta x} \times (-T , T)
\end{cases}
\]
\end{itemize}
\end{theorem}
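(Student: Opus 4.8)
The plan is to prove the theorem, in this section, for the homogeneous Cauchy problem $(E)$, recovering the Dirichlet condition and the terms $a,\sigma,w$ by perturbation of this model case in the later sections; accordingly the line $\xi(x)(t)\equiv 0$ on $\partial\Omega_{\Delta x}$ of Lagrange's ODE, together with $a(x)$, $\sigma(x)$, $w(x,t)$, is momentarily vacuous. The argument rests on an explicit description of the plane waves of the scheme rather than on the general Lax equivalence (stability plus consistency) argument, because the intermediate object $\varphi^{\Delta x}$ and the identification in c) have to be produced as well. Seeking solutions of $\Box_{\Delta x,\Delta t}v=0$ of separated form $v(x,t)=e^{i\alpha\cdot x}T(t)$, and using
\[
\sum_{k=1}^{n}\delta_{k,\Delta x}^{-1}\circ\delta_{k,\Delta x}\,e^{i\alpha\cdot x}
=-\Bigl(\sum_{k=1}^{n}\frac{4}{\Delta x^{2}}\sin^{2}\frac{\alpha_{k}\Delta x}{2}\Bigr)e^{i\alpha\cdot x}
\]
together with $\delta_{\Delta t}^{-1}\circ\delta_{\Delta t}\cos(\beta t)=-\frac{4}{\Delta t^{2}}\sin^{2}\frac{\beta\Delta t}{2}\cos(\beta t)$ (and likewise for $\sin$), one is forced onto the discrete dispersion relation
\[
\sin^{2}\frac{\beta\Delta t}{2}=\frac{\Delta t^{2}}{\Delta x^{2}}\sum_{k=1}^{n}\sin^{2}\frac{\alpha_{k}\Delta x}{2},
\]
whose right-hand side does not exceed $1$ precisely because of the Courant--Friedrichs--Lewy condition \eqref{cfl}, so that it has the unique solution $\frac{\beta\Delta t}{2}\in[0,\frac{\pi}{2}]$ displayed in the statement, with limiting values $\beta(\alpha,\Delta x,0)=\frac{2}{\Delta x}\sqrt{\sum_{k}\sin^{2}\frac{\alpha_{k}\Delta x}{2}}$ and $\beta(\alpha,0,0)=|\alpha|$, the symbols of the semidiscrete and the continuous problems. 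Superposing these waves through Fourier inversion and choosing the time factors to match the discrete data ($v(x,0)=f(x)$ by Fourier inversion, and $\delta_{\overline{\Delta t}}$ applied at $t=0$ to $\frac{\Delta t\sin(\beta t)}{\sin(\beta\Delta t)}$ equals $1$) gives exactly the asserted formula; restricted to the lattice points of $\Omega\times(-T,T)$ this is $v^{\Delta x,\Delta t}$, as one checks it solves $(E)_{\Delta x,\Delta t}$ by a one-line use of the dispersion relation and invokes the uniqueness already recorded. Setting $\Delta t=0$, resp.\ $\Delta x=\Delta t=0$, in this formula yields the stated expressions for $\varphi^{\Delta x}$ and for $u$.

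The analytic core is the control of $\beta$. First I would verify that $\beta^{2}$ — equivalently the even combinations $\cos(\beta t)$ and $\beta^{-1}\sin(\beta t)$ that actually occur — is real-analytic in $(\alpha,\Delta x,\Delta t)$ near $\Delta x=\Delta t=0$, with the above boundary values, and then prove the quantitative bounds $c|\alpha|\le\beta\le C|\alpha|$ and $\bigl|\,\beta(\alpha,\Delta x,\Delta t)-|\alpha|\,\bigr|\le C(\Delta x^{2}+\Delta t^{2})|\alpha|^{3}$ on the admissible range, together with the analogous estimate for $\frac{\Delta t}{\sin(\beta\Delta t)}-\frac{1}{\beta}$. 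The one genuinely delicate point — and, I expect, the main obstacle — is that $\sin(\beta\Delta t)$ vanishes on the measure-zero set where \eqref{cfl} is an equality \emph{and} $\alpha_{k}\Delta x\equiv\pi\pmod{2\pi}$ for every $k$: near such an $\alpha_{0}$ one finds $\frac{\Delta t}{\sin(\beta\Delta t)}\sim|\alpha-\alpha_{0}|^{-1}$, a singularity which is locally integrable over $\R^{n}$ exactly because $n>1$, and which moreover sits at $|\alpha|\sim\pi/\Delta x$, where $\hat g$ is already negligible — the decay hypotheses on $g$ give $|\hat g(\alpha)|\lesssim(1+|\alpha|)^{-4}$, and likewise $|\hat f(\alpha)|\lesssim(1+|\alpha|)^{-5}$, with the corresponding bounds on $\widehat{f^{(p)}},\widehat{g^{(q)}}$. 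So the $\hat g$-integrals converge and stay uniformly controlled; this is where $n>1$ enters essentially and not merely cosmetically.

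Granting these estimates, every limit is a dominated-convergence argument. For a.1)--a.2) one bounds $|v^{\Delta x,\Delta t}(x,t)-u(x,t)|$ by
\[
\frac{1}{(2\pi)^{n/2}}\int_{\R^{n}}\Bigl(|\hat f(\alpha)|\,\bigl|\cos(\beta t)-\cos(|\alpha| t)\bigr|+|\hat g(\alpha)|\,\bigl|\tfrac{\Delta t\sin(\beta t)}{\sin(\beta\Delta t)}-\tfrac{\sin(|\alpha| t)}{|\alpha|}\bigr|\Bigr)\,d\alpha ,
\]
whose integrand — which carries no $x$-dependence — is, by $|\cos a-\cos b|\le|a-b|$ and the bounds on $\beta$, dominated uniformly in $x\in\R^{n}$, $|t|\le T$ and $(\Delta x,\Delta t)$ by a fixed $L^{1}$ function and tends to $0$ pointwise in $\alpha$; hence the bound tends to $0$, which is uniform convergence on $\R^{n}\times[-T,T]$, in particular on $E_{\infty}$ (a.1) and on $\overline\Omega\times[-T,T]$ (a.2). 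For a.3) the same applies to each difference quotient entering $\Box_{\Delta x,\Delta t}$, which carries a Fourier multiplier ($-\frac{4}{\Delta t^{2}}\sin^{2}\frac{\beta\Delta t}{2}$, resp.\ $-\frac{4}{\Delta x^{2}}\sin^{2}\frac{\alpha_{k}\Delta x}{2}$) converging to the matching symbol of $\Box$; the extra orders of decay of $\hat f,\hat g$ absorb the two lost derivatives, so these discrete second differences converge uniformly to $u_{tt}$ and $\Delta u$, whence $u\in C^{2}$ and solves $(E)$. Parts b.1)--b.4) are the same computation done as an iterated limit: first $\Delta t\to 0$ with $\Delta x$ fixed — here $\beta(\alpha,\Delta x,\Delta t)\to\beta(\alpha,\Delta x,0)$ and $\frac{\Delta t}{\sin(\beta\Delta t)}\to\frac{1}{\beta}$, with the same domination — yields the formula for $\varphi^{\Delta x}$ and, applied to $\delta_{\Delta t}^{-1}\circ\delta_{\Delta t}v^{\Delta x,\Delta t}$, statement b.3); then $\Delta x\to 0$ (using $\beta(\alpha,\Delta x,0)\to|\alpha|$) gives b.4).

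Finally, for c) I would pass to the limit $\Delta t\to 0$, with $\Delta x$ fixed, in the identity
\[
\delta_{\Delta t}^{-1}\circ\delta_{\Delta t}\,v^{\Delta x,\Delta t}(x,t)=\sum_{k=1}^{n}\delta_{k,\Delta x}^{-1}\circ\delta_{k,\Delta x}\,v^{\Delta x,\Delta t}(x,t),
\]
valid at every interior lattice point: by b.3) the left-hand side tends to $\ddot\varphi^{\Delta x}(x)(t)$, while the right-hand side, being a fixed finite combination of the values at the $2n$ neighbours of $x$, tends to $\sum_{k}\delta_{k,\Delta x}^{-1}\circ\delta_{k,\Delta x}\varphi^{\Delta x}(x)(t)$ by b.1). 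The initial conditions $\varphi^{\Delta x}(x)(0)=f(x)$ and $\dot\varphi^{\Delta x}(x)(0)=g(x)$ follow from b.1) and from the uniform convergence of $\delta_{\overline{\Delta t}}v^{\Delta x,\Delta t}$ to $\partial_{t}\varphi^{\Delta x}$ (its multiplier $\frac{\sin(\beta\Delta t)}{\Delta t}$ tending to $\beta$). Hence $\varphi^{\Delta x}(x)(\cdot)$ solves Lagrange's ODE in its present homogeneous form with vanishing right-hand side, completing the simplified version; the general statement, with the boundary condition and with $a,\sigma,w$ restored, is taken up in the later sections.
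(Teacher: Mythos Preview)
Your overall plan—explicit plane waves, the discrete dispersion relation, Fourier superposition, and the identification of $\beta(\alpha,\Delta x,\Delta t)$ with limiting values $\beta(\alpha,\Delta x,0)$ and $|\alpha|$—is exactly the paper's, and your formulas for $v^{\Delta x,\Delta t}$, $\varphi^{\Delta x}$, $u$ coincide with what the paper derives in its three spectral lemmas for $\Box$, $\Box_{\Delta x,\Delta t}$ and the mixed operator $\Diamond=\frac{d^2}{dt^2}-\sum_k\delta_{k,\Delta x}^{-1}\circ\delta_{k,\Delta x}$.

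The one substantive divergence is in how you control the factor $\dfrac{\Delta t}{\sin(\beta\Delta t)}$ near the CFL-equality singularity. You argue via local integrability of $|\alpha-\alpha_0|^{-1}$ in $\R^n$ for $n>1$ together with decay of $\hat g$, and say this is where $n>1$ ``enters essentially.'' The paper bypasses the singularity completely by a much simpler device: since $v^{\Delta x,\Delta t}$ is only ever evaluated at lattice times $t=k\Delta t$ with $|k|\le N=T/\Delta t$, the addition formula $\sin(\beta k\Delta t)=\sin(\beta(k-1)\Delta t)\cos(\beta\Delta t)+\cos(\beta(k-1)\Delta t)\sin(\beta\Delta t)$ iterated $|k|$ times gives
\[
\Bigl|\frac{\Delta t\,\sin(\beta t)}{\sin(\beta\Delta t)}\Bigr|\le |k|\,\Delta t\le T,
\]
uniformly in $\alpha\in\R^n$—the apparent singularity is simply absent on the time lattice. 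With this bound the paper does not run a dominated-convergence argument but a \emph{frequency splitting}: fix $M$ so that $\int_{|\alpha|>M}(2|\hat f|+T|\hat g|)\,d\alpha<\varepsilon/2$, then on $\{|\alpha|\le M\}$ use analyticity of $\beta$ near $(\Delta x,\Delta t)=(0,0)$ and uniform continuity of the trigonometric factors. This works for every $n\ge 1$ (indeed it reproduces Lagrange's $n=1$ case), so the dimension plays no role at this step, contrary to your claim. Your integrability route may be salvageable, but producing a single $L^1$ dominant uniform over all admissible $(\Delta x,\Delta t)$ is delicate because the singular set is a $\Delta x$-dependent lattice; the paper's two-line estimate makes that whole discussion unnecessary.

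A minor difference: for c) you pass to the $\Delta t\to 0$ limit in the scheme identity, whereas the paper verifies directly from the formula that $\varphi^{\Delta x}$ solves the semidiscrete problem $\Diamond\xi=0$ with the stated data. Both arguments are valid.
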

\begin{remark} : statements $a.1) - a.3)$ are a slight generalization of results by Courant  {\it et al.} \cite{cfl} and  H. Lewy \cite{lewy}, see also \cite{john-adv} \textsection {7}:  in the quoted articles only the homogeneous case is considered and, more important,  the ratio $\frac{\Delta t}{\Delta x}$ is kept fixed in the convergence analysis, a condition which does not fit with statement $b.4)$. For this reason in statements $a.1)-a.3)$ we consider convergence analysis of $v^{\Delta x, \Delta t}$ to the solution $u(x,t)$ of the wave equation putting no restriction on the way $({\Delta x, \Delta t}) \rightarrow (0,0)$, except its appartenence to $\mathcal{A}_T$ . Statements $b.1) -b.4), c)$ are the sought generalization of Lagrange's 1759 Theorem. The hypotheses of the theorem are satisfied {\it e.g.} if all the functions have derivatives of any order and have compact support.
\end{remark}
The remaining part of this section is devoted to the proof of this theorem in the case of constant velocity ($a \equiv 1$), infinite flexibility ($\sigma \equiv 0$), absence of forcing term ($w \equiv 0$). We will mostly consider the case of a Cauchy problem, adding the necessary comments to deal with boundary conditions at the end of this section.

The proof begins with a lemma that, though not strictly necessary, makes clear the fundamental argument leading to Theorem (\ref{principale}). Here and throughout this section we will consider complex-valued functions $v: \mathbb{R}^n \rightarrow \mathbb{C}$, the possibility to get back to real-valued functions consisting in taking the real part of $v$.
\begin{lemma}\label{spettro-c}

\begin{itemize}
\item [a)]

 Among the couples $(eigenvalue/eigenvector)$ of  $\Box$ acting on complex-valued functions defined in $\mathbb{R}^n$ are
\[
(-\beta + \vert \alpha \vert^2 , e^{i(\alpha \cdot x + \beta t)})
\]
$x \in \mathbb{R}^n, \beta \in \mathbb{R}, \alpha \in \mathbb{R}^n$. \\
\item [b)] Among the solutions of $\Box u =0$ are the functions
\[
(x,t) \rightarrow e^{i(\alpha \cdot x ) - \vert \alpha \vert^2 t}
\] \\
\item[c)] The initial value problem $(E)$, with $\Omega = \mathbb{R}^n$ and $f,g$ satisfying the same hypotheses in Theorem (\ref{principale}), has for $(x,t) \in \Omega \times (-T,T)$ the unique solution
\begin{equation}\label{u}
u(x{},t) = \frac{1}{(2 \pi)^{\frac{n}{2}}} \int_{\mathbb{R}^n} e^{i \alpha \cdot x} (\hat f (\alpha) \cos (\vert \alpha \vert t) + \frac{\hat g (\alpha)}{\vert \alpha \vert} \sin (\vert \alpha \vert t)) \; d \alpha .
\end{equation}
\end{itemize}
\end{lemma}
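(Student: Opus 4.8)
The content of the lemma is part (c); parts (a) and (b) are one-line verifications obtained by applying $\Box$ to the indicated exponentials and reading off the result. I would record from (a) the single fact that is actually used afterwards: applying $\Box$ to $e^{i(\alpha\cdot x+\beta t)}$ reproduces that exponential times a scalar function of $(\alpha,\beta)$ which vanishes precisely when $\beta=\pm\vert\alpha\vert$, so that the plane waves $e^{i(\alpha\cdot x\pm\vert\alpha\vert t)}$ are solutions of $\Box u=0$. These are the characters of the Fourier transform, and they are what makes the representation (\ref{u}) the natural candidate.

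For (c) the plan is the classical Fourier method, carried out in the ``synthetic'' direction so that existence comes for free. First I would transform $(E)$ (with $\Omega=\R^n$) in the space variable: writing $\hat u(\alpha,t)$ for the Fourier transform of $x\mapsto u(x,t)$, the equation $\Box u=0$ becomes, for each fixed $\alpha$, the harmonic-oscillator ODE $\ptt\hat u(\alpha,t)+\vert\alpha\vert^2\hat u(\alpha,t)=0$ with data $\hat u(\alpha,0)=\hat f(\alpha)$ and $\pt\hat u(\alpha,0)=\hat g(\alpha)$, whose unique solution is $\hat u(\alpha,t)=\hat f(\alpha)\cos(\vert\alpha\vert t)+\vert\alpha\vert^{-1}\hat g(\alpha)\sin(\vert\alpha\vert t)$. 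Then, to make existence rigorous, I would run this backwards: \emph{define} $u$ by the right-hand side of (\ref{u}) and check directly that it solves $(E)$. Since the integrand is a linear combination of the plane waves of part (a), $\Box$ brought under the integral sign kills the integrand pointwise in $\alpha$, so $\Box u=0$ once differentiation under the integral sign is justified; and the initial conditions drop out of $\cos 0=1$, of the vanishing at $t=0$ of the $t$-derivative of $\cos(\vert\alpha\vert t)$, of the value $1$ at $t=0$ of the $t$-derivative of $\vert\alpha\vert^{-1}\sin(\vert\alpha\vert t)$, and of Fourier inversion applied to $f$ and $g$.

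The justification for differentiating under the integral is where the hypotheses enter: two derivatives in $t$, or two in some $x_j$, applied to the integrand produce at worst the multipliers $\vert\alpha\vert^2\hat f(\alpha)$ (the transform of $-\Delta f$) and $\vert\alpha\vert\,\hat g(\alpha)$, which are integrable because $\hat{f^{(p)}}$ and $\hat{g^{(q)}}$ are assumed well defined. The only subtlety is that $\vert\alpha\vert^{-1}\sin(\vert\alpha\vert t)$ looks singular at $\alpha=0$; it is not, since it extends real-analytically with value $t$ there, so the integrand and all of its differentiated versions are continuous on $\R^n$ and the dominated-convergence / differentiation-under-the-integral arguments go through. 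For uniqueness I would invoke the energy identity: the difference $v$ of two solutions solves $(E)$ with $f=g=0$, and $E(t)=\tfrac12\int_{\R^n}\bigl(\vert\pt v\vert^2+\vert\nabla v\vert^2\bigr)\,dx$ satisfies $E'(t)=0$ (integrate $\pt v\,\ptt v$ by parts in $x$, the boundary term vanishing because the Fourier representation forces decay of $v$ and $\nabla v$ in $x$), so $E\equiv E(0)=0$ and $v\equiv0$; alternatively uniqueness is immediate from uniqueness of the ODE solution $\hat u(\alpha,\cdot)$ for each $\alpha$.

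The only real work is the integrability bookkeeping in the existence step — matching the number of derivatives of $f$ and $g$ one can afford against the growth in $\alpha$ picked up after differentiating the representation — and here the $C^5$/$W^{5,1}$ and $C^4$/$W^{4,1}$ hypotheses are far more than what this particular lemma requires; they are calibrated for the full Theorem \ref{principale}. Everything else is the standard d'Alembert/Fourier computation.
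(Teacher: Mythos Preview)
Your proposal is correct and follows essentially the same approach as the paper: both dismiss (a)--(b) as direct computation and obtain (c) by solving $(E)$ for plane-wave data, superposing via the Fourier transform, and invoking the regularity hypotheses to pass $\Box$ under the integral. You supply more detail than the paper does---in particular the $\alpha=0$ removable singularity and a uniqueness argument, neither of which the paper's proof addresses explicitly.
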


\begin{proof}
Statements $a)$, $b)$ are a straightforward computation. Statement $c)$ follows observing that problem $(E)$ with initial data $f= e^{i \alpha \cdot x}, g \equiv 0$ has solution
\[
(x.t) \rightarrow e^{i \alpha \cdot x} \cos (\vert \alpha \vert t)
\]
while problem $(E)$ with data $f \equiv 0 , g= e^{i \alpha \cdot x}$ has solution
\[
e^{i \alpha \cdot x}  \frac{\sin (\vert \alpha \vert t)}{\vert \alpha \vert}
\]
Then by using linearity of the differential equations and regularity assumptions on data, which allows to exchange the order of application between the differential operator $\Box$ and the integral, statement $c)$ follows.
\end{proof}
The next lemma is analogous to the previous one: we just substitute the (continuous) d'Alembert operator with its discretized version $\Box_{\Delta x , \Delta t}$, and observe that eigenfunctions of such discretized d'Alembert operator are the same of the continuous version of it, while the eigenvalues depend analytically on the discretization parameters, hence allowing an explicit expression of the solutions of $(E)_{\Delta x , \Delta t}$ in term of the Fourier transform of the data. The precise statement is
\begin{lemma}\label{spettro-d}
Let $(\Delta x , \Delta t) \in \mathcal{A}_T$.
\begin{itemize}
\item [a)] Among the couples $(eigenvalue/eigenvector)$ of  $\Box_{\Delta x , \Delta t}$ acting on complex-valued functions defined in $\mathbb{R}^n$ are
\[
(G(\alpha , \beta^2 , \Delta x , \Delta t), e^{i(\alpha \cdot x + \beta t)})
\]
where
\[
G(\alpha , \beta^2 , \Delta x , \Delta t) = - \frac{\sin^2 (\frac{\beta \Delta t}{2})}{(\frac{\beta \Delta t}{2})^2} \beta^2 + \sum_{k=1}^n \frac{\sin^2 (\frac{\alpha_k \Delta x}{2})}{(\frac{\alpha_k \Delta x}{2})^2} \alpha_k^2
\] \\
\item [b.1)]Among the solutions of $\Box_{\Delta x , \Delta t} =0$ are the functions
\[
(x,t) \rightarrow e^{i(\alpha \cdot x + \beta t)}
\]
where $\beta^2 = \beta^2 (\alpha , \Delta x , \Delta t)$ and $ \beta (\alpha , \Delta x , \Delta t)$ is the positive real positive square root of the real analytic branch of solutions of $G(\alpha , \beta^2 , \Delta x , \Delta t) =0$. The fact that $\beta^2$ is real follows by (\ref{cfl})\\
\item [b.2)] The function $(\alpha . \Delta x . \Delta t) \rightarrow \beta (\alpha , \Delta x , \Delta t)$ defined in $b1)$ is real analytic in a neighbourhood of $\mathbb{R}^n \times \{ 0 \} \times \{ 0 \} \subset \mathbb{R}^n \times \mathbb{R} \times \mathbb{R}$, therefore for any fixed $M>0$ there exists $R>0$ such that $\beta (\alpha , \Delta x , \Delta t)$ is real analytic in $\{ \alpha \in \mathbb{R}^n : \vert \alpha \vert \leq M \} \times \{ \vert \Delta x \vert <R \}  \times \{ \vert \Delta x \vert <R \}$, and $\beta (\alpha , 0,0)= \vert \alpha \vert$. \\
\item [c)] The initial value problem $(E)_{\Delta x , \Delta t}$, with $f,g$ satisfying the hypotheses of Theorem (\ref{principale}) and with $(\Delta x , \Delta t) \in \mathcal{A}_T$, has the unique solution
\[
v^{\Delta x , \Delta t} (x,t) = \frac{1}{(2 \pi)^{\frac{n}{2}}} \int_{\mathbb{R}^n} e^{i \alpha \cdot x} (\hat f (\alpha) \cos (\beta t) + \hat g (\alpha) \frac{\Delta t}{\sin (\beta \Delta t) } \sin (\beta t) ) \; d \alpha
\]
where  $\beta = \beta (\alpha , \Delta x , \Delta t)$ and $(x,t) \in \Sigma_{\Delta x , \Delta t} \cap (\Omega \times (-T,T))$.
\end{itemize}
\end{lemma}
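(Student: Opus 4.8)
The plan is to settle (a) and (b.1) by direct computation, (b.2) by the analytic implicit function theorem, and (c) by superposing the eigensolutions of $\Box_{\Delta x,\Delta t}$ from (b.1) together with the uniqueness of the explicit scheme. For (a), I would apply $\Box_{\Delta x,\Delta t}$ to $e^{i(\alpha\cdot x+\beta t)}$: each elementary operator multiplies an exponential by a scalar, and using $e^{i\theta}-2+e^{-i\theta}=-4\sin^2(\theta/2)$ one gets $\delta_{\Delta t}^{-1}\circ\delta_{\Delta t}\,e^{i\beta t}=-\frac{\sin^2(\beta\Delta t/2)}{(\beta\Delta t/2)^2}\beta^2\,e^{i\beta t}$ and likewise $\delta_{k,\Delta x}^{-1}\circ\delta_{k,\Delta x}\,e^{i\alpha\cdot x}=-\frac{\sin^2(\alpha_k\Delta x/2)}{(\alpha_k\Delta x/2)^2}\alpha_k^2\,e^{i\alpha\cdot x}$, whose combination gives $\Box_{\Delta x,\Delta t}e^{i(\alpha\cdot x+\beta t)}=G(\alpha,\beta^2,\Delta x,\Delta t)\,e^{i(\alpha\cdot x+\beta t)}$. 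Part (b.1) then follows from $G=0$, which is equivalent to $\sin^2(\beta\Delta t/2)=\rho^2$ with $\rho:=\frac{\Delta t}{\Delta x}\bigl(\sum_{k=1}^n\sin^2(\alpha_k\Delta x/2)\bigr)^{1/2}$; since $\rho^2\le n\,\Delta t^2/\Delta x^2\le 1$ by the CFL bound (\ref{cfl}), the equation has a real solution, and one retains the principal branch $\beta\Delta t/2=\arcsin\rho\in[0,\pi/2]$, so that $\beta$ is real and $\ge 0$.

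For (b.2), the decisive observation is that $G$ is even in $\beta$, hence a genuine real-analytic function of $z:=\beta^2$ jointly with $(\alpha,\Delta x,\Delta t)$ near $\Delta x=\Delta t=0$, since $w\mapsto\sin^2 w/w^2$ is entire; at $\Delta x=\Delta t=0$ the relation $G=0$ reads $z=|\alpha|^2$ with $\partial G/\partial z=-1\ne 0$ there, so the analytic implicit function theorem produces a unique real-analytic branch $z=\beta^2(\alpha,\Delta x,\Delta t)$ on a neighbourhood of each $(\alpha_0,0,0)$ with value $|\alpha_0|^2$ at that point, the radius being uniform for $|\alpha_0|\le M$ by compactness; then $\beta=+\sqrt{\beta^2}$, analytic wherever $\beta^2>0$, with $\beta(\alpha,0,0)=|\alpha|$. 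I expect this step to be the main obstacle, for two reasons: one must show that the apparent singularities of the closed form of $\beta$ at $\Delta x=0$ and at $\Delta t=0$ are removable (which is what the implicit function argument does), and one must reconcile the assertion with the fact that $\beta$ itself is not smooth at $\alpha=0$ when $\Delta x=\Delta t=0$ — so the analyticity statement really pertains to $\beta^2$, which is all that is needed, because the only $\beta$-dependent quantities entering (c), namely $\cos(\beta t)$ and $\Delta t\sin(\beta t)/\sin(\beta\Delta t)=\bigl(\sin(\beta t)/\beta\bigr)/\bigl(\sin(\beta\Delta t)/(\beta\Delta t)\bigr)$, are even in $\beta$, hence analytic in $\beta^2$ and unambiguous across $\{\alpha=0\}$.

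For (c), (b.1) together with the evenness of $G$ in $\beta$ shows that $e^{i(\alpha\cdot x\pm\beta t)}$, and hence $e^{i\alpha\cdot x}\cos(\beta t)$ and $e^{i\alpha\cdot x}\sin(\beta t)$, solve $\Box_{\Delta x,\Delta t}=0$; I would then let $v(x,t)$ be the claimed Fourier superposition and verify the three conditions of $(E)_{\Delta x,\Delta t}$. Since $\Box_{\Delta x,\Delta t}$ is a finite real-linear combination of translations, it passes under $\int d\alpha$ (dominated convergence) and annihilates the integrand, so $\Box_{\Delta x,\Delta t}v=0$ at every point; at $t=0$ the $\hat g$-term vanishes and Fourier inversion gives $v(x,0)=f(x)$; and in $\delta_{\overline{\Delta t}}v(x,0)=\frac{v(x,\Delta t)-v(x,-\Delta t)}{2\Delta t}$ the $\hat f$-term is even in $t$ and cancels, while the $\hat g$-term contributes $\frac{1}{2\Delta t}\cdot\frac{\Delta t}{\sin(\beta\Delta t)}\cdot 2\sin(\beta\Delta t)=1$ under the integral, giving $\delta_{\overline{\Delta t}}v(x,0)=g(x)$ — this is precisely what the normalizing factor $\Delta t/\sin(\beta\Delta t)$ achieves. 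Absolute convergence of the integral defining $v$ and the legitimacy of commuting the translations with $\int d\alpha$ follow from $\hat f,\hat g\in L^1(\mathbb{R}^n)$ (ensured by the hypotheses of Theorem (\ref{principale})) together with $|\cos(\beta t)|\le 1$ and the boundedness in $\alpha$ of $\Delta t\sin(\beta t)/\sin(\beta\Delta t)$ on $\mathcal{A}_T$, where $\beta\Delta t\in[0,\pi]$; a brief extra remark is in order at the CFL boundary $\Delta t/\Delta x=1/\sqrt n$ to keep $\sin(\beta\Delta t)$ away from its zeros, for instance by using strict inequality there. Finally, $(E)_{\Delta x,\Delta t}$ is an explicit scheme — rewriting $\Box_{\Delta x,\Delta t}v=0$ as a recursion for the top time level expresses $v(\cdot,t+\Delta t)$ through $v(\cdot,t)$ and $v(\cdot,t-\Delta t)$ — so $v(\cdot,\pm\Delta t)$ is determined by $f$ and $g$ (the scheme at $t=0$ together with $\delta_{\overline{\Delta t}}v(\cdot,0)=g$) and the recursion reaches every level of $\Sigma_{\Delta t}\cap(-T,T)$ in finitely many steps since $T/\Delta t\in\mathbb{N}$; hence the solution is unique and coincides with $v$.
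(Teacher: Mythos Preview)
Your proposal is correct and follows essentially the same route as the paper: direct computation of $\Box_{\Delta x,\Delta t}$ on exponentials for (a)--(b.1), the analytic implicit function theorem applied to $G(\alpha,\beta^2,\Delta x,\Delta t)=0$ at the base point $\beta^2=|\alpha|^2$ with $\partial G/\partial(\beta^2)=-1$ for (b.2), and Fourier superposition plus the explicit-scheme uniqueness for (c), including the same determination of the constant $\Delta t/\sin(\beta\Delta t)$ from the condition $\delta_{\overline{\Delta t}}v(x,0)=g(x)$. Your additional remarks --- that only $\beta^2$ (not $\beta$) is genuinely analytic across $\alpha=0$, and that the factor $\Delta t\sin(\beta t)/\sin(\beta\Delta t)$ needs a word at the CFL boundary --- are sound refinements; the paper handles the latter bound slightly later via the addition-formula iteration yielding $|\Delta t\sin(\beta t)/\sin(\beta\Delta t)|\le T$.
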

\begin{proof}
A straightforward computation gives
\[
\delta_{\Delta t}^{-1} \circ \delta_{\Delta t} e^{i (\alpha \cdot x + \beta t)} = - e^{i (\alpha \cdot x + \beta t)}  \frac{\sin^2 \frac{\beta \Delta t}{2}}{(\frac{\beta \Delta t}{2})^2} \beta^2
\]
and analogously
\[
\delta_{k , \Delta x , }^{-1} \circ \delta_{k , \Delta x} e^{i (\alpha \cdot x + \beta t)} = - e^{i (\alpha \cdot x + \beta t)}  \frac{\sin^2 \frac{\alpha_k \Delta x}{2}}{(\frac{\alpha_k \Delta x}{2})^2} \alpha_k^2
\]
and statements $a)$, $b.1)$ follow.

From (\ref{cfl}) and
\[
\begin{cases}
G(\alpha , \vert \alpha \vert^2 , 0,0)=0 \\
\frac{\partial G}{\partial (\beta^2)}(\alpha , \vert \alpha \vert^2, 0,0)= -1
\end{cases}
\]
the equation $G(\alpha , \beta^2 , \Delta x , \Delta t) =0$ defines implicitly  a {\it real} analytic  branch of $\beta^2 = \beta^2 (\alpha . \Delta x , \Delta t)$  emanating from $\vert \alpha \vert^2$ whose positive square root is $\beta (\alpha , \Delta x , \Delta t)$:  the fact that the equation $G=0$ has real solutions follows from Courant-Friedrichs-Lewy condition (\ref{cfl}). The rest of statement $b.2)$ follows from elementary geometric properties of the analytic set defined by $G=0$. Finally the proof of statement $c)$ is analogous to statement $c)$ of the previous lemma. Perhaps the only useful remark is that, knowing that
\[
(x,t) \rightarrow c e^{i \alpha \cdot x} \sin (\beta t)
\]
is a solution of $\Box_{\Delta x , \Delta t} v =0$ for any real constant $c$, satisfying $v(x.0)=0$, the condition $\delta_{\overline {\Delta t}} v (x,0) = e^{i \alpha \cdot x}$ reads as
\[
c  e^{i \alpha \cdot x} \frac{\sin (\beta \Delta t) - \sin ( -\beta \Delta t)}{2 \Delta t} =  e^{i \alpha \cdot x}
\]
which determines $c= \frac{\Delta t}{\sin (\beta \Delta t)}$ and proves $c)$: the validity of the analytic form of $v^{\Delta x, \Delta t}$ in the domain $\Sigma_{\Delta x , \Delta t} \cap \Omega \times (-T,T)$ follows from (\ref{cfl}).
\end{proof}
To prove statement $a.1)$ of Theorem (\ref{principale}) for the simplified inital data problem $(E)$ and its approximation $(E)_{\Delta x \Delta t}$ we must show that for any fixed $T>0$
\[
\forall \varepsilon >0 \; \exists \; \overline {\Delta x}=\overline {\Delta x}(\varepsilon , T), \overline {\Delta t}=\overline {\Delta t}(\varepsilon , T)>0
\]
such that if $\frac{\Delta t}{\Delta x} \leq \frac{1}{\sqrt{n}}$, $0<\Delta t <\overline {\Delta t}$, $0<\Delta x <\overline {\Delta x}$, $\frac{T}{\Delta t} \in \mathbb{N}$ then
\[
\vert v^{\Delta x , \Delta t}(x.t) - u(x,t) \vert < \varepsilon
\]
uniformly for $(x,t) \in E_{\infty}$. Using the analytic expression of $u(x,t)$ we get
\[
\vert  v^{\Delta x , \Delta t}(x.t) - u(x,t) \vert \leq \frac{1}{(2 \pi)^{\frac{n}{2}}} \int_{\mathbb{R}^n} (\vert \hat f (\alpha) \vert \vert \cos (\beta t) - \cos (\vert \alpha \vert t) \vert + \vert \hat g (\alpha) \vert \vert \frac{\Delta t \sin (\beta t)}{\sin (\beta \Delta t)}  - \frac{\sin (\vert \alpha \vert t)}{\vert \alpha \vert}\vert) \; d \alpha =
\]
\[
\frac{1}{(2 \pi)^{\frac{n}{2}}} \int_{\vert \alpha \vert \leq M} \Theta \; d \alpha + \frac{1}{(2 \pi)^{\frac{n}{2}}} \int_{\vert \alpha \vert > M} \Theta \; d \alpha
\]
where $M>0$. We observe that $t \in \Sigma_{\Delta t} \cap (-T,T)$ and  $T=N \Delta t$, $N \in \mathbb{N}$, therefore $t=k \Delta t$, $k \in \mathbb{Z}, \vert k \vert \leq N$. Then
\[
\sin (\beta t) = \sin (\beta k \Delta t) = \sin (\beta (k-1) \Delta t + \beta \Delta t)
\]
therefore
\[
\vert \frac{\sin (\beta t)}{\sin (\beta \Delta t)} \vert \leq 1 + \vert \frac{\sin (\beta (k-1) \Delta t)}{\sin (\beta \Delta t)} \vert
\]
whose iteration $k$-times gives
\begin{equation}\label{seno}
\vert \frac{\Delta t \sin (\beta t)}{\sin (\beta \Delta t)} \vert \leq T
\end{equation}
therefore
\[
\Theta \leq 2\vert \hat f (\alpha) \vert + \vert \hat g (\alpha) \vert T .
\]
The hypotheses on $f,g$ imply that $\vert \hat f (\alpha) \vert, \vert \hat g (\alpha) \vert$ decays sufficiently fast in order that there exists $M=M(\varepsilon  ,T )$ such that
\[
\frac{1}{(2 \pi)^{\frac{n}{2}}} \int_{\vert \alpha \vert > M} \Theta \; d \alpha < \frac{\varepsilon}{2}.
\]
For such fixed $M$ we must find sufficiently small $\overline {\Delta x} $, $\overline {\Delta t} $ such that
\[
\frac{1}{(2 \pi)^{\frac{n}{2}}} \int_{\vert \alpha \vert \leq M} \Theta \; d \alpha  < \frac{\varepsilon}{2}
\]
if $0<\Delta x < \overline {\Delta x}$, $0<\Delta t < \overline {\Delta t}$,  and (\ref{cfl}) holds. The Fourier transforms of data are bounded on $\mathbb{R}^n$, and from Lemma (\ref{spettro-d}) the function $(\alpha , \Delta x , \Delta t) \rightarrow \beta (\alpha , \Delta x , \Delta t)$ is analytic in a neighbourhood of $\{ \vert \alpha \vert \leq M\} \times [- \overline {\Delta x} , \overline {\Delta x}] \times [- \overline {\Delta t} , \overline {\Delta t}]$, and the last inequality follows from uniform continuity in $\mathbb{R}$ of trigonometric functions. The proof of statement $a.1)$ in Theorem (\ref{principale}) is concluded: for the considered simplified version of $(L)$, and with fixed ratio $\frac{\Delta t}{\Delta x}$, it  is due to Lewy \cite{lewy}, see also \cite{john-adv} \textsection $7.3$. We remark that convergence is uniform for $(x,t) \in E_{\infty}$. The last argument, based on the splitting of the Fourier transform expression of the difference between an approximated and a limit solution in a high frequency part, uniformly estimated for given $T$ by the fast decay of Fourier trasform of data, and in a bounded frequency part, estimated by uniform continuity of trigonometric function and analytic extension up to $0$-value of the discrtization parameters of the frequency function of the approximated solution, will be used several times in the proof: we will refer to it as {\it frequency splitting argument}.

Let $(x,t) 	\in \Omega \times (-T,T) - E_{\infty}$: there exists a sequence $(x, t_p) \in \Sigma_{\Delta x , {\Delta t}_p }\in \mathcal{A}_T$ such that $(x, t_p) \rightarrow (x,t)$ and ${\Delta t}_p =\frac{ \Delta t}{2^p}$ for a given $\Delta t$. Let $v^p = v^{\Delta x , {\Delta t}_p}$. Then
\[
\vert v^p (x,t_p) -u(x,t) \vert \leq \vert v^p (x,t_p) - u(x,t_p) \vert + \vert u(x,t_p) - u(x,t) \vert 
\]
The first term of the sum in the {\it r.h.s.} of the last inequality can be made as small as we wish independently of $(x,t_p) \in E_{\infty}$ just choosing $p$ sufficiently big, as proved before. The second term in the {\it r.h.s.} of the last equality, which is defined in $\Omega \times (-T,T)$, can be made as small as we wish using the frequency splitting argument and uniform continuity of the integrand in the analytic expression of $u(.,.)$: this ends the proof of statement $a.2)$. Incidentally, the analytic expression of the limit $u(.,.)$ proves the this extension of the limit of the solution of $(E)_{\Delta x, \Delta t}$ from $E_{\infty}$ to $\Omega \times (-T,T)$ is unique.  This argument does not request all the regularity assumptions in Theorem (\ref{principale}): the higher regularity hypotheses are used when the same argument is applied to the difference quotients of $v^{\Delta x , \Delta t}$ to get the conclusions in statement $a.3)$.
To prove statements $b.1) - b.3)$ of Theorem (\ref{principale}) we will use the analytic expression obtained in Lemma (\ref{spettro-d}) and prove that, remembering that when $(\Delta x , \Delta t) \in \mathcal{A}_T$ then $\Delta t = \frac{T}{N}$, $N \in \mathbb{N}$
\begin{equation}\label{zero}
\begin{split}
&lim_{\Delta t \rightarrow 0} v^{\Delta x , \Delta t} =lim_{\Delta t \rightarrow 0}  \frac{1}{(2 \pi)^{\frac{n}{2}}} \int_{\mathbb{R}^n} e^{i \alpha \cdot x}(\hat f(\alpha) \cos (\beta t) + \hat g (\alpha) \frac{\Delta t}{\sin (\beta \Delta t)} \sin (\beta t)) \; d \alpha =
\\
&\frac{1}{(2 \pi)^{\frac{n}{2}}} \int_{\mathbb{R}^n} e^{i \alpha \cdot x}(\hat f(\alpha) \cos (\beta_0 t) + \hat g (\alpha) \frac{1}{\beta_0 } \sin (\beta_0 t)) \; d \alpha = \varphi^{\Delta x}(x)(t)
\end{split}
\end{equation}
where the last equality is the definition of $\varphi^{\Delta x}(x)(t)$ and
\[
\beta_0 = \beta (\alpha , \Delta x , 0)= \sqrt{\sum_{k=1}^n \frac{\sin^2 (\frac{\alpha_k \Delta x}{2})}{(\frac{\alpha_k \Delta x}{2})^2} \alpha_k^2} .
\]
To prove the above equalities we must prove that
\[
\forall \varepsilon >0 \; \exists \overline {\Delta t}=\overline {\Delta t}(\varepsilon , T)>0
\]
such that $\forall \Delta t \in ]0 , \overline {\Delta t}[$ we have
\[
\vert \int_{\mathbb{R}^n} e^{i \alpha \cdot x}(\hat f(\alpha) (\cos (\beta t)  - \cos (\beta_0 t))+ \hat g (\alpha) (\frac{\Delta t}{\sin (\beta \Delta t)} \sin (\beta t) - \frac{1}{\beta_0 } \sin (\beta_0 t)))  \; d \alpha  \vert < \varepsilon
\]
As we did in proving statement $a.1)$ we use the frequency splitting argument observing that
\[
\vert \int_{\mathbb{R}^n} e^{i \alpha \cdot x}(\hat f(\alpha) (\cos (\beta t)  - \cos (\beta_0 t))+ \hat g (\alpha) (\frac{\Delta t}{\sin (\beta \Delta t)} \sin (\beta t) - \frac{1}{\beta_0 } \sin (\beta_0 t)))  \; d \alpha  \vert <
\]
\[
 \int_{\mathbb{R}^n} (\vert \hat f(\alpha) \vert \vert\cos (\beta t)  - \cos (\beta_0 t)\vert+ \vert \hat g (\alpha) \vert \vert \frac{\Delta t}{\sin (\beta \Delta t)} \sin (\beta t) - \frac{1}{\beta_0 } \sin (\beta_0 t))\vert  \; d \alpha  =
\]
\[
= \int_{\mathbb{R}^n} \Lambda \; d \alpha = \int_{\vert \alpha \vert \leq M} \Lambda \; d \alpha + \int_{\vert \alpha \vert >M} \Lambda \; d \alpha 
\]
for $M>0$.  Using that for any $z \in \mathbb{R}$ one has $\vert \frac{\sin z}{z} \vert \leq 1$, choosing $t$ such that $\vert t \vert \leq T$ and using (\ref{seno}) we get
\[
\vert   \frac{\Delta t}{\sin (\beta \Delta t)} \sin (\beta t) - \frac{1}{\beta_0 } \sin (\beta_0 t))  \vert \leq 2T
\]
hence there exists $M=M(\varepsilon , T)$ such that
\[
\int_{\vert \alpha \vert >M} \Lambda \; d \alpha  < \frac{\varepsilon}{2}.
\]
Applying the same argument used before, we conclude from analytic extension up to $\Delta x =0$, $\Delta t =0$ of $\beta (\alpha , \Delta x , \Delta t)$, that there exist a positive constant $\overline {\Delta t}$ such that if $\Delta t < \overline {\Delta t}$ then
\[
\int_{\vert \alpha \vert \leq M} \Lambda \; d \alpha  < \frac{\varepsilon}{2}
\]
and $b.1)$ has been proved.

$b.2) $ is proved in the same way we proved $a.2)$: here is crucial to observe that writing
\[
\varphi^{\Delta x}(x)(t)  =  \frac{1}{(2 \pi)^{\frac{n}{2}}} \int_{\mathbb{R}^n} e^{i \alpha \cdot s}(\hat f (\alpha ) \cos (\beta (\alpha , \Delta x , 0) t) + \hat g (\alpha )\frac{\sin (\beta (\alpha , \Delta x , 0)t)}{\beta (\alpha , \Delta x , 0) t}  )  t\; d \alpha
\]
such analytic expression is defined in $\Omega \times (-T.T)$.

The proof of $b.3)$ is then similar to that of $a.3)$

To prove $b.4)$ we need a lemma, analogous to  Lemma (\ref{spettro-c}) and  Lemma (\ref{spettro-d}),  for the mixed continuous/discrete differential operator
\[
\Diamond  = \frac{d^2}{d t^2} -  \sum_{k=1}^n  \delta_{k , \Delta x}^{-1} \circ \delta_ {k , \Delta x}
\]
\begin{lemma}\label{spettro-cd}
\begin{itemize}
\item [a)] Among the couples $(eigenvalue/eigenvector)$ of $\Diamond$ there are
\[
(-\beta^2 +  \sum_{k=1}^n \frac{\sin^2 (\frac{\alpha_k \Delta x}{2})}{(\frac{\alpha_k \Delta x}{2})^2} \alpha_k^2) , e^{i(\alpha \cdot x + \beta t)}
\]
where from (\ref{cfl}) $\beta \in \mathbb{R}^+$ and $\alpha \in \mathbb{R}^n$. \\
\item[b)] Among the solutions of $\Diamond \xi =0$ are the functions
\[
(x,t) \rightarrow e^{i(\alpha \cdot x + \beta_0 t)}
\]
where $\beta_0 = \sqrt{\frac{\sin^2 (\frac{\alpha_k \Delta x}{2})}{(\frac{\alpha_k \Delta x}{2})^2} \alpha_k^2) }$ \\
\item [c)] The initial value problem (Lagrange's model for the homogeneous, constant velocity, infinite flexibility, the wave equation)
\begin{equation}\label{diamante}
\begin{cases}
\Diamond \xi (x)=0  \; in \; \mathbb{R}^n \times \mathbb{R} \\
\xi (x)(0)= f(x) \; in \; \mathbb{R}^n \times \{ 0 \} \\
\dot \xi (x) (0) = g (x) \; in \; \mathbb{R}^n \times \{ 0 \}
\end{cases}
\end{equation}
with $f, g$ as in Theorem (\ref{principale}), has solution
\begin{equation}\label{fi}
\varphi^{\Delta x}(x)(t) = \frac{1}{(2 \pi)^{\frac{n}{2}}} \int_{\mathbb{R}^n} e^{i \alpha \cdot x} (\hat f (\alpha) \cos (\beta_0 t) + \frac{\hat g (\alpha)}{\beta_0} \sin (\beta_0 t)) \; d \alpha 
\end{equation}
which is defined in $\Omega \times (-T,T)$.
\end{itemize}
\end{lemma}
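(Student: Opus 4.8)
The plan is to follow, essentially verbatim, the pattern of Lemma~(\ref{spettro-c}) and Lemma~(\ref{spettro-d}), the operator $\Diamond$ being obtained from $\Box$ by keeping the $t$-derivative continuous and discretizing only the space variable. For part (a) I would simply compute $\Diamond$ on the plane wave $e^{i(\alpha\cdot x+\beta t)}$: the term $\frac{d^2}{dt^2}$ contributes the factor $-\beta^2$, while the spatial part is supplied by the identity
\[
\delta_{k,\Delta x}^{-1}\circ\delta_{k,\Delta x}\,e^{i(\alpha\cdot x+\beta t)}=-\,e^{i(\alpha\cdot x+\beta t)}\,\frac{\sin^2\frac{\alpha_k\Delta x}{2}}{(\frac{\alpha_k\Delta x}{2})^2}\,\alpha_k^2
\]
already recorded in the proof of Lemma~(\ref{spettro-d}); summing over $k=1,\dots,n$ yields the stated eigenvalue. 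Since $\Delta t$ does not enter $\Diamond$, no Courant--Friedrichs--Lewy type restriction is actually needed here: the quantity $\beta_0^2=\sum_{k=1}^n\frac{\sin^2(\alpha_k\Delta x/2)}{(\alpha_k\Delta x/2)^2}\alpha_k^2$ is automatically real and, with the convention $\sin^2(z)/z^2=1$ at $z=0$, satisfies $0\le\beta_0^2\le|\alpha|^2$. Part (b) is then immediate: the eigenvalue of (a) vanishes exactly when $\beta^2=\beta_0^2$, so taking the positive square root $\beta_0=\beta_0(\alpha,\Delta x)$ makes $e^{i(\alpha\cdot x+\beta_0 t)}$ a solution of $\Diamond\xi=0$.

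For part (c) I would first record the separated identity $\Diamond\bigl(e^{i\alpha\cdot x}\psi(t)\bigr)=e^{i\alpha\cdot x}\bigl(\psi''(t)+\beta_0^2\,\psi(t)\bigr)$, which reduces the problem, for each frequency $\alpha$, to the scalar harmonic oscillator $\psi''+\beta_0^2\psi=0$. Its two elementary solutions normalized by $(\psi(0),\psi'(0))=(1,0)$ and $(0,1)$ are $\psi=\cos(\beta_0 t)$ and $\psi=\sin(\beta_0 t)/\beta_0$ (the latter understood as $\psi=t$ when $\beta_0=0$, a harmless removable singularity, since $|\sin(\beta_0 t)/\beta_0|\le|t|\le T$). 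Hence $e^{i\alpha\cdot x}\cos(\beta_0 t)$ and $e^{i\alpha\cdot x}\sin(\beta_0 t)/\beta_0$ solve (\ref{diamante}) with data $(e^{i\alpha\cdot x},0)$ and $(0,e^{i\alpha\cdot x})$ respectively. Superposing these by Fourier inversion, that is, writing $f(x)=\frac{1}{(2\pi)^{n/2}}\int_{\mathbb{R}^n}\hat f(\alpha)e^{i\alpha\cdot x}\,d\alpha$ and likewise for $g$, then produces precisely formula (\ref{fi}). The regularity and decay hypotheses of Theorem~(\ref{principale}) on $f,g$ allow, exactly as in the proof of Lemma~(\ref{spettro-c})(c), differentiation under the integral sign and the exchange of $\Diamond$ with $\int_{\mathbb{R}^n}d\alpha$ (note that the two $t$-differentiations bring down the factor $-\beta_0^2$ with $\beta_0^2\le|\alpha|^2$, while for fixed $\Delta x$ the spatial difference quotients are bounded operators); evaluating at $t=0$ then recovers $\xi(x)(0)=f(x)$ and $\dot\xi(x)(0)=g(x)$ from $\cos 0=1$, $\sin 0=0$ and the values of the $t$-derivatives at $t=0$. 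Finally, the right-hand side of (\ref{fi}) is a function of $(x,t)$ defined for every $x\in\mathbb{R}^n$, not merely on the lattice $\Sigma_{\Delta x}$, so $\varphi^{\Delta x}$ is indeed defined on all of $\Omega\times(-T,T)$.

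Everything above is a routine transcription of the two preceding lemmas; the only points deserving a line of justification are the interchange of $\Diamond$ with the Fourier integral, handled by the decay of $\hat f,\hat g$ as in Lemma~(\ref{spettro-c})(c), and the removable (and measure-zero) nature of the apparent singularity of $\sin(\beta_0 t)/\beta_0$ on the set $\{\beta_0=0\}$ (for fixed $\Delta x>0$ the union of $\{\alpha=0\}$ with the hyperplanes $\{\alpha_k\in\frac{2\pi}{\Delta x}\mathbb{Z}\}$), which is disposed of by the elementary bound $|\sin z/z|\le1$. I do not expect any genuine obstacle.
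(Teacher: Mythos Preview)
Your proposal is correct and follows essentially the same approach as the paper: the paper's proof simply says that $a)$ and $b)$ are straightforward computations and that $c)$ follows by solving (\ref{diamante}) for the special data $f=e^{i\alpha\cdot x},\,g\equiv 0$ and $f\equiv 0,\,g=e^{i\alpha\cdot x}$, then invoking linearity and the regularity assumptions on $f,g$ to pass derivatives under the integral sign. Your write-up merely spells out these computations in more detail (the harmonic-oscillator reduction, the bound $\beta_0^2\le|\alpha|^2$, the removable singularity at $\beta_0=0$) and correctly observes that the CFL condition plays no real role for $\Diamond$.
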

\begin{proof}
sSatements $a)$, $b)$ are straightforward computations. Statement $c)$ follows computing solutions of (\ref{diamante}) with data $f(x)= e^{i \alpha \cdot x}$, $g \equiv 0$, respectively $f \equiv 0$, $g(x)= e^{i \alpha \cdot x}$: then using linearity of the equation and regularity assumptions on data, which allow the passage of derivatives up to second order of the function defined by (\ref{fi}) inside the integral in its definition, ends the proof of statement $c)$.
\end{proof}
Statement $cb.4$ of Theorem (\ref{principale}) for the homogeneous initial data problem , with $a \equiv 1$, $\sigma \equiv 0$, then follows from (\ref{zero}), (\ref{fi}). To complete the proof of Theorem (\ref{principale}) in this setting we must prove that
\[
\forall \varepsilon>0 \; \exists \overline {\Delta x} = \overline {\Delta x}(\varepsilon , T) >0
\]
such that if $\Delta x <  \overline {\Delta x}$ then
\begin{equation}\label{ultima}
\vert \varphi^{\Delta x}(x)(t) - u(x,t) \vert < \varepsilon
\end{equation}
where $u(x,t)$ is the solution of $(E)$.  From (\ref{u}) and (\ref{fi})
\[
\vert \varphi^{\Delta x}(x)(t) - u(x,t) \vert \leq \frac{1}{(2 \pi)^{\frac{n}{2}}} \int_{R}^n (\vert \hat f (\alpha) \vert   \vert \cos (\beta_0 t) - \cos (\vert \alpha \vert t) \vert + \vert \hat g (\alpha) \vert \vert \frac{\sin (\beta_0 t)}{\beta_0} - \frac{\sin (\vert \alpha \vert t)}{\vert \alpha \vert} \vert) \; d \alpha=
\]
\[
 \frac{1}{(2 \pi)^{\frac{n}{2}}} \int_{R}^n \Gamma \; d \alpha = \frac{1}{(2 \pi)^{\frac{n}{2}}} \int_{\{ \vert \alpha \vert \leq M \}}\Gamma \; d \alpha +\frac{1}{(2 \pi)^{\frac{n}{2}}} \int_{\{ \vert \alpha \vert >  M \}}\Gamma \; d \alpha
\]
Once again we use the frequency splitting argument: from (\ref{seno}), $\vert \frac{\sin z}{z} \vert \leq 1$, $\vert t \vert \leq T$ we get
\[
\Gamma \leq 2 \vert \hat f (\alpha) \vert + 2 \vert \hat g (\alpha) \vert T
\]
therefore there exists $M=M(\varepsilon , T)>0$ such that $\frac{1}{(2 \pi)^{\frac{n}{2}}} \int_{\{ \vert \alpha \vert >  M \}}\Gamma \; d \alpha < \frac{\varepsilon}{2}$. Moreover boundness of Fourier transforms of data, analyticity up to $\Delta x = \Delta t =0$ of $\beta (\alpha , \Delta x ,\Delta t)$ and
\[
lim_{\Delta x \rightarrow 0} \beta_0 (\alpha , \Delta x , 0)= \vert \alpha \vert
\]
uniformly for $\vert \alpha \vert \leq M$, implies the existence of $\overline {\Delta x} = \overline {\Delta x} (\varepsilon ,T) >0$ such that if $\Delta x < \overline {\Delta x}$ (\ref{ultima}) holds. This ends the proof of Theorem (\ref{principale}) for a homogeneous, simplified, initial value problem. 

The case of an equally simplified homogeneous mixed initial/boundary problem is dealt with in the same way we treated the initial data problem:  the boundary condition, when translated into an assignment of the values of the solutions to the mixed problem analogous to $(E)_{\Delta x , \Delta t}$,  leads to the same analytic expressions of $v^{\Delta x , \Delta t}(x,t)$, $u(x,t)$, $\varphi^{\Delta x}(x)(t)$ we found for the case of the purely initial value problem. One has only to avoid possible ambiguities in the definition of points in $\Omega_{\Delta x}$ and in $\partial_{\Delta x} \Omega$ which could occur, for instance from the existence of "double points" in the boundary $\partial \Omega$, see \cite{cfl}. Here a double point $x \in \partial \Omega$ is a point such that for any ball $B$ centered at $x$ the set $B \cap \Omega$ is not connected. In any case, the conclusion for the homogeneous, constant velocity, infinitely flexible case with mixed initial/boundary condition are, for the case of smooth boundary $\partial \Omega$, those of Theorem (\ref{principale}): they can be easily extended to the case of corners in the boundary of $\Omega$ when the boundary has no double points.
\begin{remark} The interpretation of the solution $u(x,t)$ of, say, $(E)$ as the iterated limit
\[
lim_{\Delta x \rightarrow 0} lim_{\Delta t \rightarrow 0}v^{\Delta x , \Delta t}(x,t) = u(x,t)
\]
could suggest an analogous property obtained by inversion of the order of limits, whose possible interpretation is the convergence of the Euler broken line approximation of the evolution ODE equivalent to the wave equation: this claim is actually false, in general, for the Courant-Friedrichs-Lewy condition (\ref{cfl}) clearly shows that the role played by the spatial and time discrtization parameters is not symmetric.
\end{remark}
\section{The inhomogeneous case}
In this section we will prove Theorem (\ref{principale}) in the same simplified version considered in the previous one, except for the substitution in $(E)$ of the homogeneous differential equation with the inhomogeneous one
\[
\Box u = w \; in \; \Omega \times (-T,T) .
\]
As in the previous section we will mainly pay attention to the initial data case. We will reduce the inhomogeneous Cauchy problem  to a family of homogeneous ones {\it via} variation of consatnt method (Duhamel principle), hence rededucing its  proof  to that of the homogeneous case.

Firstly we write the inhomogeneous form of $(E)$ as an equivalent first order system of PDE
\[
(EV)
\begin{cases}
\dot \xi (x) = A \xi (x) + W(x) \; in \; \Omega \times (-T,T) \\
\xi_1(x)(0)= f(x) \; in \; \Omega \\
\dot \xi_2 (x)(0)=g(x) \; in \; \Omega 
\end{cases}
\]
where $\xi ={}^t(\xi_1 , \xi_2)$, $A \xi (x)= {}^t(\xi_2 , \Delta \xi_1)$ and $W(x)(\cdot) = {}^t(0 , w(x)(\cdot))$
The discretized version of this problem, which is equivalent to $(E)_{\Delta x , \Delta t}$, is
\[
(EV)_{\Delta x , \Delta t}
\begin{cases}
\delta_{\Delta t} \xi (x)(t) = A_{\Delta x} \xi (x) (t)+ W(x)(t) \; in \; \Omega \times (-T,T) \\
\xi_1(x)(0)= f(x) \; in \; \Omega \\
\delta_{\overline {\Delta t}} \xi_2 (x)(0)=g(x) \; in \; \Omega 
\end{cases}
\]
where $A_{\Delta x}$ is obtained from $A$ substituting the Laplace operator with respect to the spatial variable with its discretized version $\sum_{k=1}^n  \delta_{k , \Delta x}^{-1} \circ \delta_ {k , \Delta x}$ and all the functions appearing in $(EV)_{\Delta x , \Delta t}$ are evaluated in the lattices $\Sigma_{\Delta x , \Delta t}$, $\Sigma_{\Delta x}$, $(\Delta x , \Delta t) \in \mathcal{A}_T$.

Let $\eta^{\Delta x , \Delta t}(x)(t)$ be the solution of homogeneous case ($W \equiv 0$) of $(EV)_{\Delta x , \Delta t}$: from the theory developed in the previous section it has the form
\[
\eta^{\Delta x , \Delta t}(x)(t) = \int_{\mathbb{R}^n} \mathcal{W}^{\Delta x , \Delta t}(x, \alpha ) (t) 
\begin{pmatrix}
\hat f (\alpha) \\
\hat g (\alpha)
\end{pmatrix}
\; d \alpha
\]
where
\[
\mathcal{W}^{\Delta x , \Delta t}(x, \alpha ) (t)  = \frac{e^{i \alpha \cdot x}}{(2 \pi)^{\frac{n}{2}}}
\begin{pmatrix}
\cos (\beta t)  & \frac{\Delta t \sin (\beta t) }{\sin (\beta \Delta t)} \\
- \frac{\sin (\beta  t)}{\beta} & \cos (\beta t) \frac{\beta \Delta t}{\sin (\beta \Delta t)}
\end{pmatrix}
\]
$\beta = \beta (\alpha , \Delta x , \Delta t)$ being defined in Lemma (\ref{spettro-d}). Analogously the solution of $(EV)$ in the homogeneous case is
\[
\eta (x)(t) = \int_{\mathbb{R}^n} \mathcal{W}(x, \alpha ) (t) 
\begin{pmatrix}
\hat f (\alpha) \\
\hat g (\alpha)
\end{pmatrix}
\; d \alpha
\]
where
\[
\mathcal{W} (x, \alpha ) (t)  = \frac{e^{i \alpha \cdot x}}{(2 \pi)^{\frac{n}{2}}}
\begin{pmatrix}
\cos (\vert \alpha \vert t)  & \frac{ \sin (\vert \alpha t \vert) }{\vert \alpha \vert} \\
- \vert \alpha \vert \sin (\vert \alpha \vert t) & \cos (\vert \alpha \vert t) .
\end{pmatrix}
\]
With these notations statement $a.1)$ of Theorem (\ref{principale}), proved in the present homogeneous case in the previous section, implies
\[
lim_{\Delta x , \Delta t \rightarrow (0,0)}  \mathcal{W}^{\Delta x , \Delta t}(x, \alpha ) (t) = \mathcal{W}(x, \alpha ) (t) 
\]
uniformly for $(x,t) \in E_{\infty}$.

We define the mixed discrete/continuous Cauchy problem
\[
(EV)_{\Delta x}
\begin{cases}
\dot \xi (x)(t) = A_{\Delta x} \xi (x) (t)+ W(x)(t) \; in \; \Omega \times (-T,T) \\
\xi_1(x)(0)= f(x) \; in \; \Omega \\
\dot \xi_2 (x)(0)=g(x) \; in \; \Omega 
\end{cases}
\]
whose solution is
\[
\varphi^{\Delta x} (x)(t) = \int_{\mathbb{R}^n} \mathcal{W}^{\Delta x}(x, \alpha ) (t) 
\begin{pmatrix}
\hat f (\alpha) \\
\hat g (\alpha)
\end{pmatrix}
\; d \alpha
\]
In the last section we proved that
\[
\begin{cases}
lim_{\Delta t \rightarrow 0} \mathcal{W}^{\Delta x , \Delta t}(x,\alpha)(t) =\mathcal{W}^{\Delta x} (x, \alpha)(t) \\
lim_{\Delta x \rightarrow 0} \mathcal{W}^{\Delta x }(x,\alpha)(t) =\mathcal{W} (x, \alpha)(t)
\end{cases}
\]
uniformly for $(x,t) \in E_{\infty}$.
By linearity of the wave equation, to prove Theorem (\ref{principale}) in the inhomogeneous case it is sufficient to prove it when the initial data are $f \equiv 0$, $g \equiv 0$. By the classical theory of variation of constants applied to  $(E)_{\Delta x , \Delta t}$ we get that the solution of $(E)_{\Delta x , \Delta t}$ with such initial data is
\[
\begin{cases}
(x,t) \rightarrow \int_0^t \mathcal{W}^{\Delta x , \Delta t}(x)(t-s) W(x)(s) \; d s \\
\mathcal{W}^{\Delta x , \Delta t}(x)(t-s) = \int_{\mathbb{R}^n} \mathcal{W}^{\Delta x , \Delta t}(x, \alpha)(t-s) W(x, \alpha)(s)  \; d \alpha
\end{cases}
\]
where {\it e.g.} $W(x, \alpha)(s)$ is the Fourier transform of $W(x)(s)$ with respect to the $x$-variable.
Analogously the solution of  $(E)_{\Delta x}$ with null initial data is
\[
\begin{cases}
(x,t) \rightarrow \int_0^t \mathcal{W}^{\Delta x }(x)(t-s) W(x)(s) \; d s \\
\mathcal{W}^{\Delta x }(x)(t-s) = \int_{\mathbb{R}^n} \mathcal{W}^{\Delta x}(x, \alpha)(t-s) W(x)(s).
\end{cases}
\]
Theorem (\ref{principale}) impies that if $(\Delta x,\Delta t) \in \mathcal{A}_T$, uniformly for $(x,t) \in E_{\infty}$
\[
lim_{(\Delta x,\Delta t) \rightarrow (0,0)}\int_0^t \mathcal{W}^{\Delta x , \Delta t}(x)(t-s) W(x)(s) \; d s  = \int_0^t \mathcal{W}(x)(t-s) W(x)(s) \; d s 
\]
\[
lim_{\Delta t \rightarrow 0 } \int_0^t \mathcal{W}^{\Delta x , \Delta t}(x)(t-s) W(x)(s) \; d s =\int_0^t \mathcal{W}^{\Delta x}(x)(t-s) W(x)(s) \; d s 
\]

\[
lim_{\Delta x \rightarrow 0} \int_0^t \mathcal{W}^{\Delta x }(x)(t-s) W(x)(s) \; d s = \int_0^t \mathcal{W}(x)(t-s) W(x)(s) \; d s .
\]
On the other hand, the regularity assumptions on data and forcing term imply that
\[
\frac{d}{dt} \int_0^t \mathcal{W}(x)(t-s) W(x)(s) \; d s  = W(x)(t) + \int_0^t \frac{d}{dt} \mathcal{W}(x)(t-s) W(x)(s) \; d s 
\]
and the definition of $\mathcal{W}(x)(t)$ implies that
\[
\begin{cases}
\frac{d}{dt}\mathcal{W}(x)(t) c = A \mathcal{W}(x)(t) c \\
\mathcal{W}(x)(0) c = c
\end{cases}
\]
therefore

\[
\frac{d}{dt}(\mathcal{W}(x)(t)) c + \int_0^t
\mathcal{W}(x)(t-s)W(x)(s) \; ds ) = 
A \mathcal{W}(x)(t) c + W(x)(t) +\int_0^t A \mathcal{W}(x)(t-s) W(x)(s) \; d s =
\]
\[
 A (\mathcal{W}(x)(t) c + \int_0^t \mathcal{W}(x)(t-s) W(x)(s) \; d s ) + W(x)(t),
\]
{\it i.e.} the function
\[
t \rightarrow \mathcal{W}(x)(t)) c + \int_0^t
\mathcal{W}(x)(t-s)W(x)(s) \; ds
\]
is solution of $(EV)$. The special case when $c=0$ proves Theorem (\ref{principale}) for initial data $f \equiv 0 , g \equiv 0$, and therefore it ends the proof of such theorem  in the inhomogeneous, constant velocity, infinitely flexible, initial data case. Finally applying the same argument explained at the end of the last section extends the conclusion to the mixed initial/boundary case.

\section{Conclusion of the proof of the main theorem}

To finish the proof of Theorem (\ref{principale}) we write in the equation in $(L)$ the velocity function as
\[
a(x) = 1 + b(x)
\]
and write the solution $u(x,t)$ of $(L)$ as
\[
u(x,t)= \phi (x,t) + v(x)
\]
where $v(\cdot )$ is solution of
\[
(EL)
\begin{cases}
b(x) \Delta v = \sigma (x) v \; in \; \Omega \\
v(x)= h(x) \; in \; \partial \Omega
\end{cases}
\]
and $\phi (\cdot , \cdot )$ is solution of
\[
(L)'
\begin{cases}
\Box \phi = w \; in \; \Omega \times (-T , T) \\
\phi (x,0) = f(x) - v(x) \; in \; \Omega \times \{ 0 \} \\
\frac{\partial \phi}{\partial t}(x,0)= g(x) \; in \; \Omega \times \{ 0 \} \\
\phi (x,t) = 0 \; in \; \partial \Omega \times (-T,T) .
\end{cases}
\]
The compatibility conditions for $(L)'$ follow from those of $(L)$. The problems $(EL)$, $(L)'$ have discrtized versions
\[
(EL)_{\Delta x }
\begin{cases}
b(x) \sum_{k=1}^n  \delta_{k , \Delta x}^{-1} \circ \delta_ {k , \Delta x} v = \sigma (x)  \; in \; \Sigma_{\Delta x} \cap \Omega \\
v(x)= h(x) \; in \; \partial \Omega_{\Delta x}
\end{cases}
\]
and 
\[
(L)'_{\Delta x , \Delta t}
\begin{cases}
\Box \phi = w \; in \; \Sigma_{\Delta x , \Delta t} \cap \Omega \times (-T , T) \\
\phi (x,0) = f(x) - v(x) \; in \; \Sigma_{\Delta x} \cap \Omega \times \{ 0 \} \\
\frac{\partial \phi}{\partial t}(x,0)= g(x) \; in \; \Sigma_{\Delta x} \cap \Omega \times \{ 0 \} \\
\phi (x,t) = 0 \; in \; \partial \Omega-{\Delta x} \times (\Sigma_{\Delta t}(-T,T)) .
\end{cases}
\]
The existence of the solution $v^{\Delta x} : \Sigma_{\Delta x \cap \Omega \rightarrow \mathbb{R}}$ of $(EL)_{\Delta x}$ was originally  proved in \cite{cfl} and it is a standard result in numerical theory of elliptic PDEs. The existence of the solution of $(L)'_{\Delta x , \Delta t}$ follows by the explicit nature of the numerical scheme, and from (\cite{cfl}), and
\[
lim_{(\Delta x , \Delta t)\rightarrow (0,0)} \phi^{(\Delta x , \Delta t)}(x,t) = \phi (x,t)
\] 
is proved in the previous sections.

The Lagrange's discrete mechanical model equation
\[
(L)'_{\Delta x}
\begin{cases}
\ddot \xi (x) = \sum_{k=1}^n  \delta_{k , \Delta x}^{-1} \circ \delta_ {k , \Delta x} \xi (x) + w(x) \; in \; \Sigma _{\Delta x , \Delta t} \cap (\Omega \times (-T,T)) \\
\xi (x) (0) = f(x) - v^{\Delta x}(x) \; in \; \Sigma_{\Delta x}\cap \Omega \\
\dot \xi (x)(0) = g(x) \; in \; \Sigma_{\Delta x}\cap \Omega \\
\xi (x)(t) \equiv 0 \; in \; \partial \Omega_{\Delta x} \times (-T,T)
\end{cases}
\]
has solution $\varphi^{\Delta x}(x)(t)$ which, as proved in the previous sections, satisfies
\[
\begin{cases}
lim_{\Delta t \rightarrow 0} \phi^{\Delta x , \Delta t}(x,t) = \varphi^{\Delta x}(x)(t) \\
lim_{\Delta x \rightarrow 0} \varphi^{\Delta x}(x)(t)= \phi (x,t)
\end{cases}
\]
uniformly for $(x,t) \in \Omega \times (-T,T)$, hence
\[
lim_{\Delta x \rightarrow 0} \varphi^{\Delta x}(x)(t) + v^{\Delta x}(x) = u(x,t)
\]
uniformly for $(x,t) \in \Omega \times (-T,T)$. The proof of Theorem (\ref{principale}) is concluded.

As a final comment we observe that, as mentioned at the end of the second section, Theorem (\ref{principale}) is valid even if $\partial \Omega$ has corners ( {\it i.e.} points  where the tangent space to the boundary does not exists, but it does exist a tangent cone), but it has no "double points", see definition in the second section. Moreover, the fact that the domain of dependence of the solution $u(\cdot , \cdot)$ of $(L)$  is finite implies that the hypothesis of compactness of the support of $w$ is actually unessential, see {\it e.g.} footnote at the end of \textsection{5.3} in \cite{john}.

\end{document}